\documentclass[11pt]{article}

\usepackage{amsthm}

\usepackage[nocompress]{cite}
\usepackage{graphicx}

\usepackage{hyperref}
\hypersetup{
    colorlinks=true,
    linkcolor=blue,
    anchorcolor=blue,
    citecolor=blue,
}

\usepackage{enumitem}


 \usepackage{amsmath}
\usepackage{amsfonts}
\usepackage{amssymb, amsfonts, amsmath, nicefrac}
\usepackage{color}

 \oddsidemargin=.1cm \topmargin=-0.9in
\textwidth=6.9in
 \textheight=8.9in


\newcommand{\cP}{{\mathfrak P}}

\newcommand{\cM}{{\mathfrak M}}

\newcommand{\cC}{{\mathfrak C}}

\newcommand{\cR}{{\mathfrak R}}

\newcommand{\ch}{{\mathfrak h}}

\newtheorem{theorem}{Theorem}[section]

\newtheorem{proposition}{Proposition}[section]

\newtheorem{remark}{Remark}[section]

\newtheorem{assumption}{Assumption}[section]
\newtheorem{corollary}{Corollary}[section]

\def\argmin{\mathop{\rm arg\,min}}
\def\argmax{\mathop{\rm arg\,max}}

\newcommand{\U}{{\cal U}}
\newcommand{\V}{{\cal V}}
\newcommand{\Z}{{\cal Z}}

\newcommand{\X}{{\cal X}}

\newcommand{\PP}{{\cal P}}

\newcommand{\Y}{{\cal Y}}

\newcommand{\R}{{\cal R}}

\newcommand{\be}{\begin{equation}}
\newcommand{\ee}{\end{equation}}

\newcommand{\rr}{\rightrightarrows}

\newcommand{\lan}{\langle}
\newcommand{\ran}{\rangle}

\def\w{\omega}



\newcommand{\avr}{{\sf AV@R}}

\newcommand{\VaR}{{\sf V@R}}

\def\bbr{{\Bbb{R}}} 
\def\bbe{{\Bbb{E}}} 

\def\bbp{{\Bbb{P}}}
\def\bbm{{\Bbb{M}}}

\def\hPi{\widehat{\Pi}}


\begin{document}

\begin{titlepage}
\title{\bf Distributionally robust stochastic optimal control}
\author{ {\bf Alexander Shapiro}\thanks{Georgia Institute of Technology, Atlanta, Georgia
30332, USA, \tt{ashapiro@isye.gatech.edu}\newline
Research of this
author was partially supported by Air Force Office of Scientific Research (AFOSR)
under Grant FA9550-22-1-0244.} \and
{\bf Yan Li}\thanks{
Texas A\&M University, College Station, Texas
77843, USA, \tt{gzliyan113@tamu.edu}}
}


\maketitle
\begin{abstract}
The main goal of this paper is to discuss construction of distributionally robust counterparts of stochastic optimal control problems. Randomized and non-randomized policies are considered. In particular necessary and sufficient conditions for existence of non-randomized policies are given.
\end{abstract}

{\bf Keywords:} stochastic optimal control, dynamic equations distributional robustness, game formulation,    risk measures

\end{titlepage}

\setcounter{equation}{0}
\section{Introduction}
\label{sec-intr}

Consider the   Stochastic Optimal Control  (SOC)  (discrete time, finite horizon) model (e.g., \cite{ber78}):
\begin{equation}\label{soc}
\min\limits_{\pi\in \Pi}  \bbe^\pi\left [ \sum_{t=1}^{T}
f_t(x_t,u_t,\xi_t)+f_{T+1}(x_{T+1})
\right],
\end{equation}
where $\Pi$ is the set of polices satisfying   the constraints
\begin{equation}\label{soc-b}
\Pi=\Big\{\pi=(\pi_1,\ldots,\pi_T):
u_t=\pi_t(x_t,\xi_{[t-1]}),
u_t\in \U_t, x_{t+1}=F_t(x_t,u_t,\xi_t),\;\;t=1,...,T\Big\}.
\end{equation}
Here variables  $x_t\in \bbr^{n_t}$, $t=1,...,T+1$, represent the state  of the system,   $u_t\in \bbr^{m_t}$,  $t=1,...,T$, are controls,   $\xi_t\in \Xi_t$, $t=1,...,T$, are random vectors, $\Xi_t$ is a closed subset of $\bbr^{d_t}$, $f_t:\bbr^{n_t}\times\bbr^{m_t}\times\bbr^{d_t}\to \bbr$, $t=1,...,T$, are cost functions, $c_{T+1}(x_{T+1})$ is a final cost function,  $F_t:\bbr^{n_t}\times\bbr^{m_t}\times\bbr^{d_t}\to \bbr^{n_{t+1}}$ are (measurable) mappings and $\U_t$  is a (nonempty)  subset of $\bbr^{m_t}$.
Values  $x_1$  and $\xi_0$ are  deterministic  (initial conditions); it is also possible to view $x_1$ as random with a given distribution, this is not essential for the following discussion.

\begin{itemize}
  \item
  Unless stated otherwise we assume that the probability law of the random  process $\xi_1,...,\xi_T$    does not depend on our decisions.
\end{itemize}

\begin{remark}
\label{rem-ind}
{\rm
The above assumption is basic for our analysis. It views $\xi_1,...,\xi_T$ as a random data process and assumes that its probability law does not depend on the respective  states and  actions. This assumption is reasonable  in many applications, for instance in the  example of Inventory Model, discussed in section \ref{sec-exist}, this means that the probability distribution of the demand process does not depend on the  inventory
level and order quantity. This assumption allows to separate the transition probabilities, defined by the functional relations $x_{t+1}=F_t(x_t,u_t,\xi_t)$, and the probability distribution of the data process  (see Remarks \ref{rem-pol} and  \ref{rem-law} below).
}
$\hfill \square$
\end{remark}

The optimization in \eqref{soc}   is performed over policies  $\pi\in \Pi$  determined by  decisions  $u_t$ and state variables $x_t$ considered as  functions of $\xi_{[t-1]}=(\xi_1,...,\xi_{t-1})$, $t=1,...,T$,
and satisfying the feasibility constraints \eqref{soc-b}.
We also denote $\Xi_{[t]} := \Xi_1 \times \cdots \times \Xi_t$.
  For the sake of simplicity, in order not to distract from the main message of the paper,  we assume that the control sets $\U_t$ do not depend on $x_t$.  It is possible to extend the analysis to the general case, where  the control sets are functions of the state variables.

\begin{remark}
\label{rem-pol}
{\rm
Note that because of the basic assumption that  the probability distribution of $\xi_1,...,\xi_T$ does not depend on our decisions (does not depend on states and actions),  it suffices to consider policies $\{\pi_t(\xi_{[t-1]})\}$  as functions of the process $\xi_t$ alone.
It also could be noted that
 in case the random process $\xi_t$ is {\em stagewise independent}, i.e., random vector $\xi_{t+1}$ is independent of $\xi_{[t]}$, $t=1,...,T-1$,
 it suffices to consider policies of the form  $\{u_t=\pi_t(x_t)\}$.
 }  $\hfill \square$
\end{remark}

Consider  Banach space $C(\Xi_t)$ of continuous functions $\phi:\Xi_t\to \bbr$ equipped with the sup-norm $\|\phi\|_\infty=\sup_{\xi\in \Xi_t}|\phi(\xi)|$. The dual space  of
$C(\Xi_t)$ is the space of finite signed measures on $\Xi_t$
with respect to the bilinear form
$
  \lan \phi,\gamma\ran=\int \phi d\gamma
$
(Riesz representation).

\begin{remark}
\label{rem-conv}
{\rm
Denote by $\cM_t$ the set of probability measures on the set $\Xi_t$ equipped with its Borel sigma algebra.
The set $\cM_t$ is  a weakly$^*$ closed
subset of the   unit ball   of the dual space of  $C(\Xi_t)$ and hence is weakly$^*$ compact
by the Banach - Alaoglu theorem. The weak$^*$ topology\footnote{In probability theory this weak$^*$  topology is often referred to as the weak topology, e.g.,   \cite{bill}. } of $\cM_t$ is metrizable (e.g., \cite[Theorem 6.30]{guide2006infinite}).
It can be noted that if $\phi_n\in C(\Xi_t)$ converges (in the norm topology) to $\phi\in C(\Xi_t)$ and $\gamma_n\stackrel{\w^*}\to \gamma$, then
$
 \int \phi_n d\gamma_n= \lan \phi_n,\gamma_n\ran
$
converges to $\int \phi d\gamma$.
}  $\hfill \square$
\end{remark}

\begin{remark}
\label{rem-saddle}
{\rm
Let us recall the following properties of the   min-max problem:
 \begin{equation}\label{minmax-1}
\min_{x\in \X}\sup_{y\in \Y} f(x,y),
\end{equation}
 where $\X$ and $\Y$ are nonempty sets and $f:\X\times \Y\to \bbr$ is a real valued function.
A point  $(x^*,y^*)\in \X\times \Y$ is a saddle point of   problem \eqref{minmax-1}
if
$
x^*\in \argmin_{x\in \X} f(x,y^*)$ and $y^*\in \argmax f(x^*,y).
$
If the saddle point exists, then problem \eqref{minmax-1} has the same optimal value as its dual
\begin{equation}\label{minmax-2}
\max_{y\in \Y} \inf_{x\in \X} f(x,y),
\end{equation}
and $x^*$ is an optimal solution of problem \eqref{minmax-1} and $y^*$ is an optimal solution of problem \eqref{minmax-2}. Conversely,
if the    optimal values of problems  \eqref{minmax-1} and  \eqref{minmax-2} are the same, and $x^*$ is an optimal solution of problem \eqref{minmax-1} and $y^*$ is an optimal solution of problem \eqref{minmax-2}, then
$(x^*,y^*)$ is a saddle point.
By Sion's minimax theorem \cite{sion}, we have that  if $\X$ and $\Y$ are convex subsets of  linear topological   spaces, $f(x,y)$ is continuous, convex in $x$ and concave in $y$, and at least one of  the sets $\X$ or  $\Y$ is compact, then the optimal values of problems  \eqref{minmax-1} and  \eqref{minmax-2} are equal to each other.
}  $\hfill \square$
\end{remark}

The risk neutral SOC model \eqref{soc} was thoroughly investigated. On the other hand, the  analysis of   its  distributionally robust  counterpart is more involved. The distributionally  robust approach to
 Markov Decision Processes  (MDPs)  can be traced back to   \cite{iyen} and \cite{nil05}.  The origins of distributional robustness can be also related to the dynamic game theory (e.g.,  \cite{Jask2016} and references there in). By duality arguments the distributionally robust  formulations are closely related to the  risk averse settings.
 The aim of this paper  is to describe and formulate certain related  questions in the framework  of the SOC model. In particular we consider randomized policies and give necessary and suffcient conditions for existence of non-randomized optimal policies. We also discuss a relation between the nested  risk averse and game formulations of distributionally robust   problems.

\section{Distributionally Robust Stochastic Optimal Control}

In the distributionally robust counterpart of the risk neutral SOC problem \eqref{soc} it is assumed that the probability law of the data process $\xi_t$ is not specified exactly, but rather is a member of a specified so-called ambiguity set of probability distributions.
This modeling concept could be applied in various contexts, such as the Inventory Model discussed in Section \ref{sec-exist}, where the order quantity must be determined in the presence of uncertain demand distributions.
Specifically,
consider the setting where there is  ambiguity set $\PP_t^{\xi_{[t-1]}}$ of probability measures on $\Xi_t$,   depending  on the history $\xi_{[t-1]}=(\xi_1,...,\xi_{t-1})$  (cf., \cite{shaopres2016}).
That is, $\PP_t^{\xi_{[t-1]}}=\bbm_t(\xi_{[t-1]})$, where $\bbm_t:\Xi_{[t-1]}\rr \cM_t$ is the respective  multifunction.
Here $\bbm_1(\cdot) \equiv \PP_1$ for some pre-determined $\PP_1\subset \cM_1$.

We    view the distributionally robust  counterpart of problem \eqref{soc}  as a dynamic  game between   the decision
maker (the controller) and the adversary (the nature).
 Define history   of the decision process as
 \begin{equation}\label{histor}
\mathfrak{h}_t = (x_1,u_1,P_1,\xi_1, x_2,...,u_{t-1},P_{t-1},\xi_{t-1}, x_t), \;
t=1,...,T.
 \end{equation}
 At stage $t=1,...,T$, based  on $\mathfrak{h}_t$
    the nature chooses a probability measure  $P_t\in \PP_t^{\xi_{[t-1]}}$,
at the same time the controller chooses its action $u_t \in \U_t$. For a realization of $\xi_t\sim P_t$, the next state $x_{t+1}=F_t(x_t,u_t,\xi_t)$ and so on the process is continued.  
(It is also possible to let the ambiguity set $\PP_t^{\xi_{[t-1]}}$ depend on the past actions $u_{[t-1]} = (u_1, \ldots, u_{t-1})$; this does not change the essence of our discussion for the dynamic game.) 
We write   the corresponding distributionally robust  problem  as
\begin{equation}\label{gamesoc}
\min\limits_{\pi\in \Pi} \sup_{\gamma\in \Gamma}\bbe^{\pi,\gamma}\left [ \sum_{t=1}^{T}
f_t(x_t,u_t,\xi_t)+f_{T+1}(x_{T+1})
\right],
\end{equation}
where $\Pi$ is the set of polices of the controller and  $\Gamma$ is the set of policies of the nature (cf., \cite[p. 812]{shaope2021}).

Unless stated otherwise
we make the following assumptions about $\U_t, \Xi_t, f_t, F_t$.
\begin{assumption}
\label{assum-1}
The sets $\U_t$ and $\Xi_t$, $t=1,...,T$,  are compact, the functions  $F_t(x_t,u_t,\xi_t)$,  $f_t(x_t,u_t,\xi_t)$, $t=1,...,T$,  and  $f_{T+1}(x_{T+1})$ are continuous.
\end{assumption}

\subsection{Dynamic Equations}
The dynamic programming equations for the
distributionally robust counterpart \eqref{gamesoc}
of problem  \eqref{soc} are $\V_{T+1}(x_{T+1}, \xi_{[T]})=f_{T+1}(x_{T+1})$ for all $\xi_{[T]} \in \Xi_1 \times \cdots \times \Xi_T$, and  for $t=T,...,1$,
\begin{align}
\label{depen-1}
\V_t(x_t,\xi_{[t-1]}) & =\inf\limits_{u_t \in \U_t}\sup_{P_t\in \PP^{\xi_{[t-1]}}_t}
\bbe_{ \xi_t\sim P_t}  \left [f_t(x_t,u_t,\xi_t)+
\V_{t+1}\big(F_t(x_t,u_t,\xi_t),\xi_{[t]} \big)\right].
\end{align}
To ensure that \eqref{depen-1} is well-defined, we need value functions $\V_t$ to be measurable.
We establish below the continuity of the value functions and that the  optimal action in \eqref{depen-1} can be attained.
We refer the readers to the  Appendix for the  definition of continuous multifunctions.

\begin{proposition}
\label{pr-cont2}
Suppose that Assumption  {\rm \ref{assum-1}} holds and  the multifunctions $\bbm_t$ are  continuous.
  Then the value functions $\V_t(x_t,\xi_{[t-1]})$ in \eqref{depen-1} are continuous in $(x_t,\xi_{[t-1]})\in \bbr^{n_t}\times \Xi_{[t-1]} $       and the infimum in \eqref{depen-1} is attained.
  The optimal policy of the controller is given by
 $ \bar{\pi}_t(x_t, \xi_{[t-1]})=\bar{u}_t(x_t, \xi_{[t-1]})$, $t=1,...,T,$ with
  \begin{equation}
\label{soc-3}
\bar{u}_t(x_t, \xi_{[t-1]})\in \argmin\limits_{u_t \in \U_t }\sup_{ P_t\in \PP_t^{\xi_{[t-1]}}}
\bbe_{ P_t}  \left [f_t(x_t,u_t,\xi_t)+
\V_{t+1}\big(F_t(x_t,u_t,\xi_t),\xi_{[t]} \big)\right].
\end{equation}
\end{proposition}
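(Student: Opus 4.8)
The plan is to prove the statement by backward induction on $t$, from $t=T+1$ down to $t=1$. The base case is immediate: $\V_{T+1}(x_{T+1},\xi_{[T]})=f_{T+1}(x_{T+1})$ is continuous by Assumption~\ref{assum-1} (and trivially jointly continuous in $(x_{T+1},\xi_{[T]})$ since it does not depend on $\xi_{[T]}$). For the inductive step, assume $\V_{t+1}(x_{t+1},\xi_{[t]})$ is continuous on $\bbr^{n_{t+1}}\times\Xi_{[t]}$. I would first define the integrand
\[
g_t(x_t,u_t,\xi_t,\xi_{[t-1]}):=f_t(x_t,u_t,\xi_t)+\V_{t+1}\big(F_t(x_t,u_t,\xi_t),\xi_{[t]}\big),
\]
which is continuous in all its arguments as a composition of continuous functions ($f_t$ and $F_t$ by Assumption~\ref{assum-1}, $\V_{t+1}$ by the induction hypothesis). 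Then I would establish continuity of the inner expectation
\[
\Phi_t(x_t,u_t,\xi_{[t-1]},P_t):=\bbe_{\xi_t\sim P_t}\big[g_t(x_t,u_t,\xi_t,\xi_{[t-1]})\big]
\]
jointly in $(x_t,u_t,\xi_{[t-1]})\in\bbr^{n_t}\times\U_t\times\Xi_{[t-1]}$ and $P_t\in\cM_t$ (the latter with the weak$^*$ topology). This is where the observation recorded in Remark~\ref{rem-conv} is the key tool: if $(x_t^k,u_t^k,\xi_{[t-1]}^k)\to(x_t,u_t,\xi_{[t-1]})$ and $P_t^k\stackrel{\w^*}{\to}P_t$, then $g_t(x_t^k,u_t^k,\cdot,\xi_{[t-1]}^k)\to g_t(x_t,u_t,\cdot,\xi_{[t-1]})$ in $C(\Xi_t)$ (uniform convergence follows from joint continuity on the compact set $\Xi_t$ together with compactness of the ranges of the converging sequences), so the pairing $\lan g_t(x_t^k,u_t^k,\cdot,\xi_{[t-1]}^k),P_t^k\ran$ converges to $\lan g_t(x_t,u_t,\cdot,\xi_{[t-1]}),P_t\ran$.

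Next I would handle the $\sup$ over $P_t$. Define $\psi_t(x_t,u_t,\xi_{[t-1]}):=\sup_{P_t\in\PP_t^{\xi_{[t-1]}}}\Phi_t(x_t,u_t,\xi_{[t-1]},P_t)$. Because $\bbm_t$ is a continuous multifunction with values that are weakly$^*$ compact subsets of $\cM_t$ (each $\PP_t^{\xi_{[t-1]}}\subseteq\cM_t$, and $\cM_t$ is weakly$^*$ compact by Remark~\ref{rem-conv}), and because $\Phi_t$ is jointly continuous with $\cM_t$ metrizable, I would invoke the Berge maximum theorem (in its form for continuous multifunctions with compact values) to conclude that $\psi_t$ is continuous in $(x_t,u_t,\xi_{[t-1]})$ and that the supremum is attained. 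Here I should be slightly careful: Berge requires the parameter-to-feasible-set map to be continuous and the objective continuous jointly in parameter and variable; the parameter is $(x_t,u_t,\xi_{[t-1]})$, the variable is $P_t$, the feasible set depends only on $\xi_{[t-1]}$ through $\bbm_t$, and $\Phi_t$ also depends on $(x_t,u_t)$ — this fits the hypotheses after noting that a map constant in some coordinates is still continuous in all of them.

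Finally I would take the infimum over $u_t$. Now $\V_t(x_t,\xi_{[t-1]})=\inf_{u_t\in\U_t}\psi_t(x_t,u_t,\xi_{[t-1]})$ with $\U_t$ compact (Assumption~\ref{assum-1}) and $\psi_t$ continuous, so another application of the Berge maximum theorem — this time with the trivial constant multifunction $\xi_{[t-1]}\mapsto\U_t$ — yields that $\V_t$ is continuous in $(x_t,\xi_{[t-1]})\in\bbr^{n_t}\times\Xi_{[t-1]}$ and that the infimum is attained, with the argmin map admitting a measurable (indeed the set-valued solution map is upper semicontinuous with nonempty compact values, so a measurable selection exists) selection $\bar u_t(x_t,\xi_{[t-1]})$ satisfying \eqref{soc-3}. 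This closes the induction and gives the optimal policy $\bar\pi_t(x_t,\xi_{[t-1]})=\bar u_t(x_t,\xi_{[t-1]})$. The main obstacle is the joint-continuity claim for $\Phi_t$ in the mixed topology (Euclidean in $(x_t,u_t,\xi_{[t-1]})$, weak$^*$ in $P_t$): one must upgrade pointwise convergence of the integrands to uniform convergence over $\Xi_t$ in order to apply the pairing-continuity fact from Remark~\ref{rem-conv}, and this uniformity is exactly where compactness of $\Xi_t$, $\U_t$, and the convergent sequences is used; everything downstream is a routine double application of Berge's theorem.
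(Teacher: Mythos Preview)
Your proposal is correct and follows essentially the same route as the paper's own proof: backward induction, joint continuity of the expectation functional via uniform convergence of the integrand on the compact $\Xi_t$ combined with Remark~\ref{rem-conv}, then two successive applications of Berge's maximum theorem (the paper invokes its Theorem~\ref{th-app}, which is precisely this result) --- first for the $\sup$ over $P_t\in\bbm_t(\xi_{[t-1]})$, then for the $\inf$ over $u_t\in\U_t$. Your additional remark on the existence of a measurable selector for $\bar u_t$ is a welcome detail that the paper leaves implicit.
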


\begin{proof}
We argue by induction in $t=T,...,1$.
We have that $\V_{T+1} (x_{T+1}, \xi_{[T]}) = f_{T+1}(x_{T+1})$ and hence $\V_{T+1}(x_{T+1}, \xi_{[T]})$ is continuous by the assumption. Now suppose that $\V_{t+1}(\cdot,\cdot)$ is continuous.
Consider
\begin{eqnarray*}
\psi (P_t, x_t, u_t,\xi_{[t-1]})& :=&  \bbe_{\xi_t \sim P_t} \left[ f_t(x_t, u_t, \xi_t) + \V_{t+1}(F_t(x_t, u_t, \xi_t), \xi_{[t]})  \right]\\
&=& \int_{\Xi_t} f_t(x_t, u_t, \xi_t) + \V_{t+1}(F_t(x_t, u_t, \xi_t), \xi_{[t-1]},\xi_t) P_t(d \xi_{t}).
\end{eqnarray*}
 From  Assumption  {\rm \ref{assum-1}} the set   $\Xi_t$ is compact, and subsequently for any sequence $(x_t^n, u_t^n, \xi_{[t-1]}^n)$ converging to $(x_t, u_t, \xi_{[t-1]})$, we have that
 $\phi_t^n(\xi_t):=  f_t(x^n_t, u_t^n, \xi_t) + \V_{t+1}(F_t(x_t^n, u_t^n, \xi_t), (\xi_{[t-1]}^n, \xi_t))$  converges to $\phi_t(\xi_t) := f_t(x_t, u_t, \xi_t) + \V_{t+1}(F_t(x_t, u_t, \xi_t), \xi_{[t]})$ uniformly in $\xi_{t} \in  \Xi_{t}$.
 For any $P_t^n \overset{w^*}{\to}  P_t$, it follows by Remark \ref{rem-conv} that  $\psi_t(P_t^n, x_t^n, u_t^n, \xi_{[t-1]}^n) \to \psi_t(P_t, x_t, u_t, \xi_{[t-1]})$.
 In addition, since $\Xi_t$ is compact, $C(\Xi_t)$ is separable,  and $\cM_t$ with weak$^*$ topology is  metrizable \cite[Theorem 6.30]{guide2006infinite}.
 It follows that  $\psi_t$ is  jointly continuous  in $P_t$ (with respect to the  weak$^*$ topology of $\cM_t$) and $(x_t, u_t, \xi_{[t-1]})\in \bbr^{n_t} \times \bbr^{m_t}  \times \Xi_{[t-1]}$.
 Consequently we obtain by Theorem \ref{th-app} that the function
\[
\nu(x_t, u_t,\xi_{[t-1]}):=\sup_{P_t\in \PP_t^{\xi_{[t-1]}}}\psi (P_t, x_t, u_t,\xi_{[t-1]})
\]
is continuous in $(x_t, u_t, \xi_{[t-1]})$.
Finally again by Theorem \ref{th-app} we have that
$$\V_t(x_t,\xi_{[t-1]})=\inf_{u_t\in \U_t} \nu(x_t,u_t,\xi_{[t-1]})$$ is continuous.
The infimum is attained as $\U_t$ is compact.
\end{proof}

\begin{remark}
\label{rem-law}
{\rm
It can be noted that   because of the basic assumption that the probability laws of the process $\xi_t$  do not depend on the states and actions, it follows  that the state $x_t$ can be considered as a function of  $\xi_{[t-1]}$, and hence  the value functions $\V_t(\xi_{[t-1]})$   and  optimal policies   $\bar{\pi}_t(\xi_{[t-1]})$
can be viewed as functions of the process $\xi_1,...,\xi_T$ alone (compare with Remark \ref{rem-pol}).
} $\hfill \square$
\end{remark}

\begin{remark}
\label{rem-opt}
{\rm
Condition  \eqref{soc-3} is sufficient for the corresponding policy to be optimal.  It could be interesting  to note that for certain classes of ambiguity sets such condition is not necessary for the policy to be optimal for problem \eqref{gamesoc}  even if ambiguity sets $\PP_t$ do not depend on $\xi_{[t-1]}$ (cf., \cite{shaope2017}).
} $\hfill \square$
\end{remark}

\subsection{Randomized Policies}

We also consider  randomized policies for the controller.  That is, at stage $t=1,...,T$, the nature chooses a probability measure  $P_t\in \PP_t^{\xi_{[t-1]}}$,
at the same time the controller chooses its action $u_t$ at random according to a probability distribution
 $\mu_t\in \cC_t$,   where  $\cC_t$ denotes the set of (Borel) probability measures   on $\U_t$. Consequently
$x_{t+1}=F_t(x_t,u_t,\xi_t)$ and so on.
Here the  decision process is defined by histories  $\ch_t$,  as defined   in \eqref{histor}, with actions $u_t$ chosen at random.

This corresponds to the min-max formulation
\begin{equation}\label{randsoc-2}
\min\limits_{\mu\in \hPi} \sup_{\gamma\in \Gamma}\bbe^{\mu,\gamma}\left [ \sum_{t=1}^{T}
f_t(x_t,u_t,\xi_t)+f_{T+1}(x_{T+1})
\right],
\end{equation}
where $\hPi$ is the set of randomized policies of the controller that maps $\mathfrak{h}_t$ into $\cC_t$ at each stage.
For the randomized policies
the counterpart of  dynamic equation \eqref{depen-1}  becomes
 $V_{T+1}(x_{T+1}, \xi_{[T]})=f_{T+1}(x_{T+1})$ for all $\xi_{[T]} \in \Xi_{[T]}$, and  for $t=T,...,1$,
\begin{equation}
\label{depen-1-random}
V_t(x_t,\xi_{[t-1]})=\inf\limits_{\mu_t\in \cC_t}\sup_{P_t\in \PP^{\xi_{[t-1]}}_t}
\bbe_{(u_t,\xi_t)\sim \mu_t\times P_t}  \left [f_t(x_t,u_t,\xi_t)+
V_{t+1}\big(F_t(x_t,u_t,\xi_t),\xi_{[t]} \big)\right]
\end{equation}
(cf., \cite{LiShapiro},\cite{WangBlanchet2023}).
The controller has a non-randomized optimal policy if for every $t$ and $x_t$   problem \eqref{opt_policy_randomized} below has optimal solution supported on a single point of $\U_t$.

Note that by the Banach - Alaoglu theorem  the set $\cC_t$  is compact in the weak$^*$ topology of the dual of the space $C(\U_t)$.
 With similar arguments as in Proposition \ref{pr-cont2}, we have the following.

\begin{proposition}
\label{pr-cont2-nonrandom}
Suppose that Assumption  {\rm \ref{assum-1}} holds and  the multifunctions $\bbm_t$ are  continuous.
  Then the value functions $V_t(x_t,\xi_{[t-1]})$ in \eqref{depen-1-random} are continuous in $(x_t,\xi_{[t-1]})\in \bbr^{n_t}\times \Xi_{[t-1]} $       and the infimum in \eqref{depen-1-random} is attained.
  The optimal policy of the controller is given by
  $\bar{\pi}_t(x_t,\xi_{[t-1]})=\bar{\mu}_t(x_t,\xi_{[t-1]})$ with
\begin{equation}
\label{opt_policy_randomized}
\bar{\mu}_t(x_t,\xi_{[t-1]})\in \argmin\limits_{\mu_t \in \cC_t}
\sup_{P_t\in \PP^{\xi_{[t-1]}}_t}
\bbe_{(u_t,\xi_t)\sim \mu_t\times P_t}  \left [f_t(x_t,u_t,\xi_t)+
V_{t+1}\big(F_t(x_t,u_t,\xi_t),\xi_{[t]} \big)\right].
\end{equation}
\end{proposition}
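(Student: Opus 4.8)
The plan is to run the same backward induction as in the proof of Proposition~\ref{pr-cont2}, with the controller's variable now being a measure $\mu_t\in\cC_t$ instead of a point $u_t\in\U_t$; the only genuinely new ingredient is that one must handle the product measure $\mu_t\times P_t$. We argue by induction on $t=T,\dots,1$. The base case $V_{T+1}(x_{T+1},\xi_{[T]})=f_{T+1}(x_{T+1})$ is continuous by Assumption~\ref{assum-1}. Assume $V_{t+1}(\cdot,\cdot)$ is continuous and set
\[
\psi_t(P_t,\mu_t,x_t,\xi_{[t-1]}):=\bbe_{(u_t,\xi_t)\sim\mu_t\times P_t}\!\left[f_t(x_t,u_t,\xi_t)+V_{t+1}\big(F_t(x_t,u_t,\xi_t),\xi_{[t]}\big)\right].
\]

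First I would establish that $\psi_t$ is jointly continuous in $(P_t,\mu_t,x_t,\xi_{[t-1]})$, where $\cM_t$ and $\cC_t$ carry their weak$^*$ topologies. Fix $(x_t^n,\xi_{[t-1]}^n)\to(x_t,\xi_{[t-1]})$. Since $F_t,f_t,V_{t+1}$ are continuous and $\U_t\times\Xi_t$ is compact, the functions
\[
\phi_t^n(u_t,\xi_t):=f_t(x_t^n,u_t,\xi_t)+V_{t+1}\big(F_t(x_t^n,u_t,\xi_t),(\xi_{[t-1]}^n,\xi_t)\big)
\]
converge uniformly on $\U_t\times\Xi_t$ to $\phi_t(u_t,\xi_t):=f_t(x_t,u_t,\xi_t)+V_{t+1}\big(F_t(x_t,u_t,\xi_t),\xi_{[t]}\big)$. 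If in addition $\mu_t^n\overset{w^*}{\to}\mu_t$ in $\cC_t$ and $P_t^n\overset{w^*}{\to}P_t$ in $\cM_t$, then the product measures satisfy $\mu_t^n\times P_t^n\overset{w^*}{\to}\mu_t\times P_t$ on the compact metric space $\U_t\times\Xi_t$: it suffices to test against functions $a(u_t)b(\xi_t)$ with $a\in C(\U_t)$, $b\in C(\Xi_t)$, whose linear span is dense in $C(\U_t\times\Xi_t)$ by Stone--Weierstrass, and then to pass to general test functions by the same uniform-boundedness estimate used in Remark~\ref{rem-conv}. Applying Remark~\ref{rem-conv} on $\U_t\times\Xi_t$ to the uniformly convergent $\phi_t^n$ and the weak$^*$-convergent $\mu_t^n\times P_t^n$ yields
\[
\psi_t(P_t^n,\mu_t^n,x_t^n,\xi_{[t-1]}^n)=\int_{\U_t\times\Xi_t}\phi_t^n\,d(\mu_t^n\times P_t^n)\longrightarrow\int_{\U_t\times\Xi_t}\phi_t\,d(\mu_t\times P_t)=\psi_t(P_t,\mu_t,x_t,\xi_{[t-1]}).
\]
Because $\cM_t$ and $\cC_t$ are metrizable in their weak$^*$ topologies (being weak$^*$-compact subsets of the duals of the separable spaces $C(\Xi_t)$, $C(\U_t)$; cf.\ \cite[Theorem 6.30]{guide2006infinite}), sequential continuity gives joint continuity of $\psi_t$.

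Next I would invoke Theorem~\ref{th-app} twice, exactly as in Proposition~\ref{pr-cont2}. Since $\bbm_t$ is a continuous multifunction into the compact set $\cM_t$ and $\psi_t$ is jointly continuous, the function
\[
\tilde\nu_t(\mu_t,x_t,\xi_{[t-1]}):=\sup_{P_t\in\PP_t^{\xi_{[t-1]}}}\psi_t(P_t,\mu_t,x_t,\xi_{[t-1]})
\]
is continuous in $(\mu_t,x_t,\xi_{[t-1]})$. Viewing $\cC_t$ as a constant, hence continuous, compact-valued multifunction, a second application of Theorem~\ref{th-app} shows that $V_t(x_t,\xi_{[t-1]})=\inf_{\mu_t\in\cC_t}\tilde\nu_t(\mu_t,x_t,\xi_{[t-1]})$ is continuous, and the infimum is attained since $\cC_t$ is compact and $\tilde\nu_t$ is continuous in $\mu_t$; any minimizer defines $\bar\mu_t(x_t,\xi_{[t-1]})$ as in \eqref{opt_policy_randomized}. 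This completes the induction.

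The only step requiring real care beyond a verbatim transcription of the proof of Proposition~\ref{pr-cont2} is the claim that separate weak$^*$ convergence of $\mu_t^n$ and $P_t^n$ forces weak$^*$ convergence of the product measures $\mu_t^n\times P_t^n$; I expect this to be the main (though elementary) obstacle, handled via Stone--Weierstrass and the boundedness bookkeeping of Remark~\ref{rem-conv}. Everything else follows by replacing $\U_t$ by $\cC_t$ as the controller's decision set and noting that $\cC_t$ is likewise compact and metrizable.
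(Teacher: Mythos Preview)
Your proposal is correct and follows essentially the same approach as the paper's proof: backward induction, uniform convergence of the integrand on the compact set $\U_t\times\Xi_t$, weak$^*$ convergence of the product measures $\mu_t^n\times P_t^n$, metrizability of $\cM_t$ and $\cC_t$, and then two applications of Theorem~\ref{th-app}. In fact you give more detail than the paper on the only new point, the weak$^*$ convergence of the product measures, which the paper simply asserts ``can be readily shown''; your Stone--Weierstrass justification is exactly what is needed there.
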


\begin{proof}
We have that $V_{T+1} (x_{T+1}, \xi_{[T]}) = f_{T+1}(x_{T+1})$ and hence $V_{T+1}(x_{T+1}, \xi_{[T]})$ is continuous by the assumption. Now suppose that $V_{t+1}(\cdot, \cdot)$ is continuous.
Consider
\begin{eqnarray*}
\psi (P_t, x_t, \mu_t,\xi_{[t-1]})& :=&  \bbe_{(u_t,\xi_t)\sim \mu_t\times P_t} \left[ f_t(x_t, u_t, \xi_t) + V_{t+1}(F_t(x_t, u_t, \xi_t), \xi_{[t]})  \right]\\
&=& \int_{\U_t}\int_{\Xi_t} f_t(x_t, u_t, \xi_t) + V_{t+1}(F_t(x_t, u_t, \xi_t), \xi_{[t]}) \mu_t(d u_t)  P_t(d \xi_{t}).
\end{eqnarray*}
 From  Assumption  {\rm \ref{assum-1}}, both  $\Xi_t$ and $\U_t$ are compact, and subsequently for any $(x_t^n, \xi_{[t-1]}^n)$ converging  to $(x_t, \xi_{[t-1]})$, we have that
 $\phi_t^n(u_t, \xi_t):=  f_t(x^n_t, u_t, \xi_t) + V_{t+1}(F_t(x_t^n, u_t, \xi_t), (\xi_{[t-1]}^n, \xi_t))$  converges to $\phi_t(u_t, \xi_t) := f_t(x_t, u_t, \xi_t) + V_{t+1}(F_t(x_t, u_t, \xi_t), \xi_{[t]})$ uniformly in $(u_t, \xi_{t}) \in \U_t \times \Xi_{t}$.
 For $P_t^n \overset{w^*}{\to}  P_t$ and $\mu_t^n \overset{w^*}{\to} \mu_t$,
  from compactness of $\Xi_t$ and $\U_t$, it can be readily shown that $P_t^n  \times \mu_t^n$ converges  to $P_t \times \mu_t$ in the weak$^*$ topology of the dual of $C(\Xi_t \times \U_t)$, and hence $\psi_t(P_t^n, x_t^n, \mu_t^n, \xi_{[t-1]}^n) \to \psi_t(P_t, x_t, \mu_t, \xi_{[t-1]})$ (Remark \ref{rem-conv}).
 In addition, both $\cM_t$ and $\cC_t$ with weak$^*$ topology are  metrizable \cite[Theorem 6.30]{guide2006infinite}.
 It follows that  $\psi_t$ is continuous jointly  in $(P_t, \mu_t)$ (with respect to the product weak$^*$ topology of $\cM_t$ and $\cC_t$) and $(x_t,\xi_{[t-1]})\in \bbr^{n_t} \times \Xi_{[t-1]}$.
 The rest of the proof then follows from the same lines as in Proposition \ref{pr-cont2}.
\end{proof}

\begin{remark}
\label{rem-law2}
{\rm
Note that here the state $x_t$ also depends on the history of chosen controls $u_\tau\sim \mu_\tau$, $\tau=1,...,t-1$. Therefore the optimal policy $\bar{\pi}_t(x_t, \xi_{[t-1]})$ cannot be written as functions of $\xi_{[t-1]}$ alone
 (compare with Remark \ref{rem-law}).
} $\hfill \square$
\end{remark}

In Section \ref{subsec_nonrandom_opt} we will discuss necessary and sufficient conditions  for the existence of  non-randomized optimal policies for the controller.

\subsection{Nested Formulation}
For {\em non-randomized} policies of the controller,   dynamic equations \eqref{depen-1}  correspond to the following nested formulation of the respective distributionally robust SOC.
For a    non-randomized  policy
$\{\pi_t=\pi_t(x_t,\xi_{[t-1]})\}$,  consider the total cost
\[
Z^\pi:=\sum_{t=1}^{T}
f_t(x_t,\pi_t,\xi_t)+f_{T+1}(x_{T+1}).
\]
Recall that we can consider here  policies $\pi_t(\xi_{[t-1]})$ as functions of  $\xi_{[t-1]}$   alone, and that the state $x_t$ is also a function of $\xi_{[t-1]}$  (see Remark \ref{rem-law}). Therefore we can view
$Z^\pi=Z^\pi(\xi_{[T]})$ as a function of the process $\xi_1,...,\xi_T$.

Consider linear spaces $\Z_t$ of bounded random variables $Z_t:\Xi_{[t]}\to \bbr$.
 For  $t=T,...,1$, define   recursively mappings $\R_t:\Z_t\to \Z_{t-1}$,
 \begin{equation}\label{mapnest}
\R_t(Z_t):=\sup_{P_t\in \PP_t^{\xi_{[t-1]}}}\left\{ \bbe_{\xi_t\sim P_t}[Z_t(\xi_1,...,\xi_{t-1},\xi_t)]=
\int_{\Xi_t} Z_t(\xi_1,...,\xi_{t-1},\xi_t)dP_t(\xi_t)\right\}.
 \end{equation}
Note that $\Z_0=\bbr$ and $\R_1(Z_1)=\sup_{P_1\in \PP_1}\bbe_{P_1}[Z_1]$ is a real number.
Let
\begin{equation}\label{nestedrisk}
 \cR_T(Z_T):=\R_1(\R_{2}(\cdots \R_T(Z_T)))
\end{equation}
 be the corresponding composite functional. Then with $Z_T=Z^\pi$  the nested distributionally robust counterpart of problem \eqref{soc} is
\begin{equation}\label{drsoc}
\begin{array}{ll}
\min\limits_{\pi\in \Pi}  \cR_T\left [ \sum_{t=1}^{T}
f_t(x_t,\pi_t,\xi_t)+f_{T+1}(x_{T+1})
\right],
\end{array}
\end{equation}
where (as before)  $\Pi$ is the set of non-randomized policies of the controller.


\subsection{Stagewise Independence Setting}\label{subsec_game}
 
An  important particular case of the above framework is when the ambiguity sets $\PP_t$ do not depend on $\xi_{[t-1]}$, 
i.e., $\PP_t^{\xi_{[t-1]}} = \PP_t$ for all $\xi_{[t-1]} \in \Xi_{[t-1]}$.
In that case the value functions $V_t(x_t)$ and optimal policies $\bar{\pi}(x_t)$   do not depend on $\xi_{[t-1]}$  and the respective  dynamic equations can be written as
\begin{equation}
\label{depen-1-stage}
V_t(x_t)=\inf\limits_{\mu_t\in \cC_t}\sup_{P_t\in \PP_t}
\bbe_{(u_t,\xi_t)\sim \mu_t\times P_t}  \left [f_t(x_t,u_t,\xi_t)+
V_{t+1}\big(F_t(x_t,u_t,\xi_t) \big)\right].
\end{equation}
Also in that case the assumption of continuity of the multifunctions $\bbm_t(\cdot) \equiv \PP_t$  holds automatically.

Here   we can consider the  static counterpart of problem \eqref{gamesoc} defined as
\begin{equation}\label{stat}
\min\limits_{\pi\in \Pi} \sup_{P \in \cP}\bbe^{\pi}_{\xi_{[T]}\sim P}\left [ \sum_{t=1}^{T}
f_t(x_t,u_t,\xi_t)+f_{T+1}(x_{T+1})
\right],
\end{equation}
where
\begin{equation}\label{cp}
\cP:=\{P=P_1\times\cdots\times P_T: P_t\in \PP_t,\;t=1,...,T\}.
\end{equation}
In the distributionally robust setting
the ambiguity set of probability distributions of $\xi_{[T]}$ of the form \eqref{cp} can be viewed as the counterpart of the stagewise-independence condition.

\setcounter{equation}{0}
\section{Duality of Game Formulation}

The dynamic equations of the dual of  problem   \eqref{randsoc-2}  are:  $Q_{T+1}(x_{T+1}, \xi_{[T]})=f_{T+1}(x_{T+1})$ for all $\xi_{[T]} \in \Xi_{[T]}$, and  for $t=T,...,1$,
\begin{equation}
\label{randsoc-4}
Q_t(x_t, \xi_{[t-1]})=\sup_{P_t\in \PP_t^{\xi_{[t-1]}}}\inf\limits_{\mu_t\in \cC_t}
\bbe_{\mu_t\times P_t}  \left [f_t(x_t,u_t,\xi_t)+
Q_{t+1}\big(F_t(x_t,u_t,\xi_t), \xi_{[t]} \big)\right].
\end{equation}
For a given $P_t$ (and $x_t$)   the maximization in the right hand side of \eqref{randsoc-4} is over all probability measures $\mu_t$ supported on the set $\Xi_t$. It is straightforward to see that the maximum is attained at Dirac measure\footnote{We denote by $\delta_a$ the Dirac measure of mass one at point $a$.} which depends on $u_t$. Therefore
it suffices to perform  the minimization in \eqref{randsoc-4} over Dirac measures $\mu_t=\delta_{u_t}$, and hence
we can write equation \eqref{randsoc-4} in the following equivalent way
\begin{equation}
\label{rands-4a}
Q_t(x_t, \xi_{[t-1]})=\sup_{P_t\in \PP_t^{\xi_{[t-1]}}}\inf\limits_{u_t\in \U_t}
\bbe_{P_t}  \left [f_t(x_t,u_t,\xi_t)+
Q_{t+1}\big(F_t(x_t,u_t,\xi_t), \xi_{[t]} \big)\right].
\end{equation}

Similar to Proposition \ref{pr-cont2-nonrandom}, it can be shown that the value functions $Q_t(\cdot)$ are well-defined and continuous.
By the standard theory of min-max, we have that $Q_t(\cdot)\le V_t(\cdot)$.
 We next establish that equality indeed holds when the multifunctions  $\bbm_t$, $t=1,...,T$,  are convex-valued, i.e., the sets $\PP_t^{\xi_{[t-1]}}=\bbm_t(\xi_{[t-1]})$ are convex for all $\xi_{[t-1]}$.

\begin{proposition}\label{prop_duality_game}
Suppose that Assumption  {\rm \ref{assum-1}} holds, and the multifunctions $\bbm_t$ are  continuous and convex-valued, then
$Q_t (\cdot) = V_t(\cdot)$ and the min-max problem in \eqref{randsoc-4} possesses saddle point $(P_t^*,\mu_t^*)$,  for all $t = 1, \ldots, T$.
\end{proposition}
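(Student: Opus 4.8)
The plan is to prove $Q_t(\cdot) = V_t(\cdot)$ by backward induction on $t = T+1, T, \ldots, 1$, where the base case $Q_{T+1} = V_{T+1} = f_{T+1}$ is immediate. For the inductive step, assume $Q_{t+1}(\cdot) = V_{t+1}(\cdot)$; writing $g_t(x_t,u_t,\xi_t) := f_t(x_t,u_t,\xi_t) + V_{t+1}(F_t(x_t,u_t,\xi_t),\xi_{[t]})$ and $\psi(P_t,\mu_t) := \bbe_{\mu_t\times P_t}[g_t]$ (suppressing the fixed arguments $x_t,\xi_{[t-1]}$), the equality $Q_t = V_t$ is exactly the minimax equality
\[
\inf_{\mu_t\in\cC_t}\sup_{P_t\in\PP_t^{\xi_{[t-1]}}}\psi(P_t,\mu_t)
=\sup_{P_t\in\PP_t^{\xi_{[t-1]}}}\inf_{\mu_t\in\cC_t}\psi(P_t,\mu_t).
\]
The idea is to invoke a Sion-type minimax theorem. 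First I would check the structural hypotheses: $\psi$ is bilinear — in fact affine (indeed linear) separately in $\mu_t$ and in $P_t$, since it is an integral against the product measure $\mu_t\times P_t$; the set $\cC_t$ of probability measures on the compact set $\U_t$ is convex and weak$^*$ compact, and $\PP_t^{\xi_{[t-1]}} = \bbm_t(\xi_{[t-1]})$ is convex by the convex-valued hypothesis and weak$^*$ compact as a (closed, by continuity of $\bbm_t$) subset of the weak$^*$ compact $\cM_t$. Second, I would verify the continuity needed: by the argument already carried out in the proof of Proposition \ref{pr-cont2-nonrandom}, $\psi$ is jointly weak$^*$ continuous in $(P_t,\mu_t)$, hence in particular upper semicontinuous in $P_t$ and lower semicontinuous in $\mu_t$. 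With convexity in one variable, concavity (here affineness) in the other, compactness of both feasible sets, and the appropriate semicontinuity, Sion's minimax theorem yields the equality. Finally, passing the equality through the $\argmin/\argmax$ structure and noting (as in \eqref{rands-4a}) that the inner $\inf$ over $\cC_t$ can be taken over Dirac measures gives $Q_t(x_t,\xi_{[t-1]}) = V_t(x_t,\xi_{[t-1]})$, completing the induction.

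There is one point requiring care: the equality $Q_{t+1} = V_{t+1}$ (i.e.\ that $Q_{t+1}$ and $V_{t+1}$ are the \emph{same} function) is what lets me use a single $g_t$ for both value functions in the minimax identity — so the induction hypothesis must be stated as functional equality, not merely $Q_{t+1}\le V_{t+1}$, and I would be explicit about that. I also need both $Q_t$ and $V_t$ to be well-defined and continuous so that the recursion makes sense at the next stage; this is handled by Proposition \ref{pr-cont2-nonrandom} for $V_t$ and by the remark in the text ("Similar to Proposition \ref{pr-cont2-nonrandom}, it can be shown that the value functions $Q_t(\cdot)$ are well-defined and continuous") for $Q_t$, so I would simply cite these.

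The main obstacle is confirming that the hypotheses of a minimax theorem are genuinely met in this infinite-dimensional setting — specifically that one does not need, say, both upper and lower semicontinuity simultaneously, and that weak$^*$ compactness of $\PP_t^{\xi_{[t-1]}}$ and $\cC_t$ is legitimately available. The compactness of $\cC_t$ in the weak$^*$ topology is noted just before Proposition \ref{pr-cont2-nonrandom}; the compactness of $\PP_t^{\xi_{[t-1]}}$ follows because continuity of $\bbm_t$ forces closedness of its values in $\cM_t$, which is weak$^*$ compact by Remark \ref{rem-conv} (Banach--Alaoglu). Given the joint weak$^*$ continuity of $\psi$ established en route to Proposition \ref{pr-cont2-nonrandom}, the separate semicontinuity requirements of Sion's theorem hold trivially, so the only real work is to state the minimax theorem in the form applicable here (on compact convex subsets of topological vector spaces with a function that is quasiconvex--upper-semicontinuous in one argument and quasiconcave--lower-semicontinuous in the other) and to verify that affineness plus joint continuity of $\psi$ supplies exactly those properties. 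Once that is in place, the induction is routine.
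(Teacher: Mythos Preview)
Your proposal is correct and follows essentially the same approach as the paper's own proof: backward induction in $t$, combined with an application of Sion's minimax theorem at each stage, using that $\cC_t$ and $\PP_t^{\xi_{[t-1]}}$ are convex and weak$^*$ compact and that the objective is bilinear (and continuous) in $(\mu_t,P_t)$. The paper's argument is simply a terser version of what you wrote, omitting the explicit verification of the Sion hypotheses and the care you take with the induction hypothesis.
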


\begin{proof}
Under the specified assumptions,  the sets $\PP_t^{\xi_{[t-1]}}$ and $\cC_t$ are weakly$^*$  compact and convex, and of course the expectation $\bbe_{\mu_t\times P_t} $ is linear with respect to $\mu_t$ and $P_t$ and continuous in the respective weak$^*$ topologies.
Then  by using Sion's  duality theorem (see Remark \ref{rem-saddle}) and applying  induction backward in time,  we can conclude that
$Q_t(\cdot)= V_t(\cdot)$, $t=1,...,T$. Further by compactness arguments the respective min-max and max-min problems  have optimal solutions, which implies existence of the saddle point.
\end{proof}

It could be worth mentioning here that the duality of the game formulation (when considering randomized policies $\widehat{\Pi}$ of the controller) does not require the convexity of $\U_t$ or any assumptions on the transition functions $F_t$.

\setcounter{equation}{0}
\section{Existence of  Non-randomized Optimal Policies}\label{subsec_nonrandom_opt}
\label{sec-exist}

We have the following necessary and sufficient condition for the existence of a  non-randomized optimal policy of the controller (cf., \cite[Theorem 2.2]{LiShapiro}).

\begin{theorem}\label{thrm_nonrandom_opt}
Suppose that Assumption  {\rm \ref{assum-1}} holds, the multifunctions $\bbm_t$ are  continuous and convex-valued. Then
the controller has    a non-randomized optimal policy   iff
the min-max problem in the right hand side of  \eqref{depen-1} has a saddle point
for all $t=1,...,T$ and $(x_t, \xi_{[t-1]})$.
\end{theorem}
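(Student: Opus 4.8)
The plan is to prove both directions via backward induction on $t = T, \dots, 1$, using the dynamic equations \eqref{depen-1} and \eqref{depen-1-random} together with the continuity and attainment results from Propositions \ref{pr-cont2} and \ref{pr-cont2-nonrandom}. The key observation linking the randomized and non-randomized value functions is that, for a fixed $(x_t,\xi_{[t-1]})$, the inner problem in \eqref{depen-1-random} is
\[
V_t(x_t,\xi_{[t-1]}) = \inf_{\mu_t\in\cC_t}\sup_{P_t\in\PP_t^{\xi_{[t-1]}}} \bbe_{\mu_t\times P_t}\big[f_t + V_{t+1}(F_t,\xi_{[t]})\big],
\]
whereas the non-randomized analogue replaces $\inf_{\mu_t\in\cC_t}$ by $\inf_{u_t\in\U_t}$ (equivalently, by $\inf$ over Dirac measures $\delta_{u_t}$); and since $\cC_t$ is the weak$^*$-closed convex hull of these Dirac measures while the objective is affine in $\mu_t$, one always has $V_t \le \V_t$. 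The content of the theorem is the reverse inequality plus the characterization of when a \emph{minimizing} $\mu_t$ can be taken Dirac.

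For the ``if'' direction I would argue as follows. Suppose the min-max problem in the right-hand side of \eqref{depen-1} has a saddle point $(\bar u_t, \bar P_t)$ for every $t$ and every $(x_t,\xi_{[t-1]})$; I first show by induction that $V_t = \V_t$ for all $t$. The base case $t=T+1$ is trivial. Assuming $V_{t+1} = \V_{t+1}$, the saddle-point value of the non-randomized game equals $\V_t(x_t,\xi_{[t-1]})$, and since a saddle point certifies $\inf\sup = \sup\inf$ for that min-max, one gets $\V_t(x_t,\xi_{[t-1]}) = \sup_{P_t}\inf_{u_t}\bbe_{P_t}[f_t + \V_{t+1}(F_t,\xi_{[t]})]$. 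On the other hand, passing to randomized controls only enlarges the feasible set for the $\inf$ in a way that is dominated when the $\sup$ is outside; more directly, $\V_t \ge V_t \ge Q_t$ where $Q_t$ is the dual value of \eqref{rands-4a}, and the chain $\sup_{P_t}\inf_{u_t}(\cdots) = Q_t \le V_t \le \V_t = \sup_{P_t}\inf_{u_t}(\cdots)$ forces equality throughout, so $V_t = \V_t = Q_t$ and moreover $\bar u_t$ (i.e.\ $\delta_{\bar u_t}$) attains the infimum in \eqref{depen-1-random}. Since this holds for all $t$ and all $(x_t,\xi_{[t-1]})$, assembling the maps $\bar\pi_t(x_t,\xi_{[t-1]}) = \bar u_t$ via Proposition \ref{pr-cont2} yields a non-randomized policy that is optimal for \eqref{randsoc-2}; optimality follows because the nested/dynamic-programming value of this policy equals $V_1$, the optimal value of \eqref{randsoc-2}.

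For the ``only if'' direction, suppose the controller has a non-randomized optimal policy $\bar\pi$ for \eqref{randsoc-2}. By a dynamic-programming argument one shows that $\bar\pi_t(x_t,\xi_{[t-1]}) = \delta_{\bar u_t}$ must attain the infimum in \eqref{depen-1-random} for every $t$ and every $(x_t,\xi_{[t-1]})$ reachable along the optimal play — but in fact, because the value functions $V_t$ are defined pointwise and, by Proposition \ref{prop_duality_game}, $V_t = Q_t$, the equality $\inf_{\mu_t}\sup_{P_t} = \sup_{P_t}\inf_{\mu_t}$ already holds at \emph{every} $(x_t,\xi_{[t-1]})$; what the existence of a non-randomized optimal policy adds is that the infimum in the $\inf_{\mu_t}\sup_{P_t}$ form is \emph{attained at a Dirac measure}. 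Combined with the attainment of the supremum over $P_t\in\PP_t^{\xi_{[t-1]}}$ (guaranteed by weak$^*$ compactness and the continuity established in Proposition \ref{pr-cont2}), one obtains a pair $(\bar u_t, \bar P_t)$ with
\[
\sup_{P_t\in\PP_t^{\xi_{[t-1]}}}\bbe_{P_t}[f_t + \V_{t+1}(F_t,\xi_{[t]})]\Big|_{u_t=\bar u_t} = \inf_{u_t\in\U_t}\bbe_{\bar P_t}[f_t + \V_{t+1}(F_t,\xi_{[t]})],
\]
which is precisely a saddle point of the min-max in \eqref{depen-1}. One subtlety to handle carefully here is that a priori the non-randomized optimal policy need only be Dirac-valued at states $x_t$ visited with positive probability; I would address this by invoking the pointwise definition of the value functions (so the saddle-point claim at a generic $(x_t,\xi_{[t-1]})$ is about the Bellman operator, not about realized trajectories) and the fact that $V_t=\V_t$ as functions, established along the way.

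\textbf{Main obstacle.} The delicate point is the bookkeeping that converts ``non-randomized policy optimal for the whole multistage problem \eqref{randsoc-2}'' into ``Dirac minimizer of the one-step Bellman problem \eqref{depen-1-random} at every $(x_t,\xi_{[t-1]})$, not merely along the optimal trajectory.'' This requires showing $V_t = \V_t$ identically — which comes from sandwiching between $Q_t$ and $\V_t$ and using Proposition \ref{prop_duality_game} — and then arguing that if at some reachable state the only optimal $\mu_t$ were genuinely randomized, one could perturb to contradict global optimality or the induction hypothesis. Making this rigorous without circularity (the induction must carry both ``$V_t=\V_t$'' and ``Dirac attainment'' simultaneously) is the crux; everything else is a routine combination of Sion's theorem, compactness, and the continuity lemmas already proved.
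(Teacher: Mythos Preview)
Your proposal is correct and follows essentially the same route as the paper: both directions hinge on the sandwich $Q_t \le V_t \le \V_t$, the equivalence between existence of a saddle point for \eqref{depen-1} and $Q_t = \V_t$, and Proposition~\ref{prop_duality_game} to secure $Q_t = V_t$. Your worry about reachability in the ``only if'' direction is unnecessary here, since the paper takes ``the controller has a non-randomized optimal policy'' to mean that for every $t$ and $(x_t,\xi_{[t-1]})$ the argmin in \eqref{opt_policy_randomized} contains a Dirac measure, which immediately yields $V_t = \V_t$ pointwise by backward induction.
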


\begin{proof}
Note that  the
 min-max problem \eqref{depen-1} (resp. the max-min problem   \eqref{rands-4a})
  has a saddle point  iff  $Q_t(\cdot)=\V_t(\cdot)$,   $t=1,...,T$.

It is clear that $V_t (\cdot) \leq \V_t(\cdot)$, and $Q_t(\cdot) \leq V_t(\cdot)$.
Suppose that for $t=1,...,T$ and all $(x_t, \xi_{[t-1]})$,    the min-max problem \eqref{depen-1} possesses  a saddle point $(\bar{u}_t,P_t)$ possibly depending on $(x_t, \xi_{[t-1]})$.
Then we have $\V_t(\cdot) = Q_t(\cdot)$ and subsequently $V_t(\cdot) = \V_t(\cdot)$.
It follows that $\bar{\mu}_t=\delta_{\bar{u}_t}$ is an optimal solution of the min-max problem \eqref{depen-1-random} and hence a non-randomized optimal policy of the controller.

Conversely, suppose that  the controller has a  non-randomized optimal policy. Then $V_t(\cdot)=\V_t(\cdot)$, $t=1,...,T$.
From Proposition \ref{prop_duality_game}, we also have that $V_t(\cdot)=Q_t(\cdot)$,  and hence $Q_t(\cdot)=\V_t(\cdot)$, $t=1,...,T$. Since problem \eqref{depen-1} and its dual have optimal solutions,
this implies the existence of a saddle point for \eqref{depen-1}  (see Remark \ref{rem-saddle}).
\end{proof}

In particular, we have the following result (cf.,  \cite{delage2019}).

\begin{corollary}
\label{cor-1}
Suppose that the sets $\U_t$ are convex, for every $\xi_t\in \Xi_t$   the functions $f_t(x_t,u_t,\xi_t)$ are convex in $(x_t,u_t)$,  the mappings $F_t(x_t,u_t,\xi_t)=A_t(\xi_t)x_t+B(\xi_t)u_t+b_t(\xi_t)$ are affine, the multifunctions $\bbm_t$ are continuous and convex-valued,  $t=1,...,T$,  and Assumption  {\rm \ref{assum-1}}  holds. Then the value functions $V_t(x_t,\xi_{[t-1]})=\V_t(x_t,\xi_{[t-1]})$  are convex in $x_t$ for every $\xi_{[t-1]}$, and the
controller has    a non-randomized optimal policy.
\end{corollary}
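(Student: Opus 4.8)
The plan is to establish two things by backward induction on $t=T,\dots,1$: first, that each value function $\V_t(x_t,\xi_{[t-1]})$ is convex in $x_t$ for every fixed $\xi_{[t-1]}$; second, that the inner min-max problem in \eqref{depen-1} admits a saddle point for every $(x_t,\xi_{[t-1]})$, so that Theorem \ref{thrm_nonrandom_opt} yields the existence of a non-randomized optimal policy. The base case $\V_{T+1}(x_{T+1},\xi_{[T]})=f_{T+1}(x_{T+1})$ is convex in $x_{T+1}$ by hypothesis, and well-definedness/continuity of all value functions is already granted by Proposition \ref{pr-cont2} together with Proposition \ref{prop_duality_game} (which gives $Q_t=\V_t$).

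For the induction step, assume $\V_{t+1}(\cdot,\xi_{[t]})$ is convex in its first argument for every $\xi_{[t]}$. First I would verify that, for each fixed $\xi_t$ and fixed $u_t$ replaced by a convex-combination argument, the integrand
\[
g_t(x_t,u_t,\xi_t):=f_t(x_t,u_t,\xi_t)+\V_{t+1}\bigl(A_t(\xi_t)x_t+B_t(\xi_t)u_t+b_t(\xi_t),\,\xi_{[t]}\bigr)
\]
is jointly convex in $(x_t,u_t)$: $f_t(\cdot,\cdot,\xi_t)$ is convex by hypothesis, and the second term is a convex function ($\V_{t+1}$ in its first slot) composed with the affine map $(x_t,u_t)\mapsto A_t(\xi_t)x_t+B_t(\xi_t)u_t+b_t(\xi_t)$, hence convex; the sum is convex. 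Next, $\psi(P_t,x_t,u_t,\xi_{[t-1]})=\bbe_{\xi_t\sim P_t}[g_t(x_t,u_t,\xi_t)]$ is, for each fixed $P_t$, an average (nonnegative mixture) of jointly convex functions of $(x_t,u_t)$, hence jointly convex in $(x_t,u_t)$; and taking $\sup$ over $P_t\in\PP_t^{\xi_{[t-1]}}$ preserves convexity in $(x_t,u_t)$, so $\nu(x_t,u_t,\xi_{[t-1]})=\sup_{P_t}\psi$ is jointly convex in $(x_t,u_t)$. Finally $\V_t(x_t,\xi_{[t-1]})=\inf_{u_t\in\U_t}\nu(x_t,u_t,\xi_{[t-1]})$ is the partial infimum over the convex set $\U_t$ of a jointly convex function, hence convex in $x_t$; the infimum is attained by compactness of $\U_t$ (Proposition \ref{pr-cont2}). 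This closes the convexity induction.

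It remains to produce a saddle point for the min-max in \eqref{depen-1}. For fixed $(x_t,\xi_{[t-1]})$, consider the function $(u_t,P_t)\mapsto\psi(P_t,x_t,u_t,\xi_{[t-1]})$ on $\U_t\times\PP_t^{\xi_{[t-1]}}$. From the previous paragraph it is convex in $u_t$; it is affine (hence concave) in $P_t$ since the expectation is linear in the measure; $\U_t$ is convex and compact, $\PP_t^{\xi_{[t-1]}}=\bbm_t(\xi_{[t-1]})$ is convex and weakly$^*$ compact by the convex-valuedness hypothesis together with Assumption \ref{assum-1}; and $\psi$ is continuous jointly in $(P_t,u_t)$ by the argument in the proof of Proposition \ref{pr-cont2}. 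Sion's minimax theorem (as already invoked in Proposition \ref{prop_duality_game}) then yields $\inf_{u_t}\sup_{P_t}\psi=\sup_{P_t}\inf_{u_t}\psi$, and since the inf over the compact $\U_t$ and the sup over the compact $\PP_t^{\xi_{[t-1]}}$ are both attained, a saddle point $(\bar u_t,\bar P_t)$ exists. By Theorem \ref{thrm_nonrandom_opt} the controller therefore has a non-randomized optimal policy, and the identity $V_t=\V_t$ follows from that theorem as well.

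The only genuinely delicate point is the joint convexity bookkeeping in the induction step — specifically, making sure the affine structure of $F_t$ is used correctly so that $\V_{t+1}\circ F_t$ is convex in $(x_t,u_t)$ jointly (not merely separately), and checking that $\sup$ over $P_t$ and $\inf$ over $u_t$ each preserve the convexity one needs. This is the main obstacle in the sense that it is where the affine-transition and convex-cost hypotheses are essential; everything else (compactness, continuity, Sion, attainment) is either routine or already in hand from the earlier propositions. Note also that continuity of $\V_t$ in $(x_t,\xi_{[t-1]})$ is needed only to invoke Theorem \ref{thrm_nonrandom_opt} and is supplied by Proposition \ref{pr-cont2}; the convexity claim is in $x_t$ for \emph{fixed} $\xi_{[t-1]}$ and requires nothing about how $\V_t$ depends on $\xi_{[t-1]}$.
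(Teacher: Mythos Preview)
Your proof is correct and is precisely the argument the paper has in mind: the paper's own proof is the one-line ``it can be directly verified that under the specified assumptions, the min-max problem \eqref{depen-1} has a saddle point, and the value functions $\V_t(\cdot,\xi_{[t-1]})$ are convex,'' and your backward induction for convexity followed by Sion's theorem and Theorem~\ref{thrm_nonrandom_opt} is exactly how one verifies this. One small slip: in your opening paragraph the parenthetical should read $Q_t=V_t$ (that is what Proposition~\ref{prop_duality_game} gives), not $Q_t=\V_t$; the latter is what you establish at the end via the saddle-point argument, and weak$^*$ compactness of $\PP_t^{\xi_{[t-1]}}$ comes from closedness of $\bbm_t$ rather than convex-valuedness --- but neither point affects the substance.
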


\begin{proof}
It can be directly verified that under the specified assumptions, the value functions $\V_t(\cdot, \xi_{[t-1]})$ are convex for all $t=1, \ldots T$ and  $\xi_{[t-1]}\in \Xi_{[t-1]}$.
Consequently
the min-max problem \eqref{depen-1} has a saddle point (see Remark \ref{rem-saddle}).
\end{proof}

In many interesting applications the considered problem is convex in the sense of   Corollary \ref{cor-1}. In such cases there is no point of considering randomized policies.
As an example consider the   classical Inventory Model (cf., \cite{zipkin}).

\paragraph{Inventory Model.}
Consider the following inventory model (cf., \cite{zipkin})
\begin{equation}\label{intr-1}
\begin{array}{cll}
 \min& \bbe\left[\sum_{t=1}^T c_t u_t+ \psi_t(x_t,u_t,D_t)\right]\\
 {\rm s.t.}& u_t\in \U_t,\;\; x_{t+1}=x_t+u_t-D_t,\;t=1,...,T,
 \end{array}
\end{equation}
where
$
 \psi_t(x_t,u_t,d_t):=b_t[d_t-(x_t+u_t)]_++h_t[x_t+u_t-d_t]_+,
 $
$D_1,...,D_T$ is a (random) demand process,   $c_t,b_t,h_t$ are the ordering, backorder penalty and holding costs per unit, respectively,
$x_t$ is the inventory level, $u_t$ is the order quantity at time $t$, and $\U_t=[0,a_t]$. Suppose that the distribution of $(D_1,...,D_T)$  is supported on $\Xi_1\times\cdots\times \Xi_T$ with $\Xi_t$, $t=1,...,T$,  being a finite subinterval of the nonnegative real line.

The distributionally robust counterpart of \eqref{intr-1} is a convex problem provided the respective multifunctions $\bbm_t$ are continuous and convex valued. It follows by Corollary \ref{cor-1} that there is  a
non-randomized optimal policy of the controller.

\setcounter{equation}{0}
\section{Construction of Ambiguity Sets}

There are several ways how the ambiguity sets can be constructed. We discuss now an approach which can be considered as an extension of the risk averse modeling of multistage  stochastic programs.

 Let $\bbp$ be a   probability measure on a (closed) set $\Xi\subset \bbr^d$, and
$\PP$  be a set of probability measures on $\Xi$ absolutely continuous with respect to $\bbp$. Consider the corresponding distributionally robust functional
\begin{equation}\label{rismes}
 \R(Z):=\sup_{P\in \PP}\bbe_P[Z]
\end{equation}
defined on an appropriate space $\Z$ of random variables $Z:\Xi\to \bbr$.  Such functional can be considered as a coherent risk measure (e.g., \cite[Chapter 6]{SDR}).
The functional $\R$ is law invariant if $\R(Z)=\R(Z')$ when $Z$ and $Z'$ are distributionally equivalent, i.e., $\bbp(Z\le z)= \bbp(Z'\le z)$ for any $z\in \bbr$. The law invariant coherent risk measure can be considered as a function of the cumulative distribution function (cdf)
$H_Z(z):=\bbp(Z\le z)$.

An important   example of law invariant coherent risk measure is  the Average Value-at-Risk
\[
\avr_{\alpha} (Z):=(1-\alpha)^{-1}\int_\alpha^1\VaR_\tau (Z)d\tau,\;\alpha\in [0,1),
\]
where $\VaR_\tau (Z)=\inf\{z:\bbp_t(Z\le z)\geq \tau\}$. It has dual representation of the form \eqref{rismes} with\footnote{The notation $P\ll Q$  means that measure $P$ is absolutely continuous with respect to measure $Q$.}
\[
\PP=\left\{P\ll\bbp:dP/d\bbp\le (1-\alpha)^{-1}\right \}.
\]

Now let $\bbp$ be a reference probability measure on $\Xi_{[T]}=\Xi_1\times\cdots\times \Xi_T$. Denote by $\bbp_t$ and  $\bbp_{[t]}$ the corresponding marginal probability measures on $\Xi_t$ and  $\Xi_{[t]}$.
Let us define $\bbp_t^{\xi_{[t-1]}}$,
with respect to
$\Xi_{[t]}=\Xi_{[t-1]}\times \Xi_t$ and $\bbp_{[t-1]}$, recursively going backward in time.
That is,
using the Regular Probability Kernel (RPK)  (e.g., \cite[III-
70]{Dellacherie}),
define $\bbp_t^{\xi_{[t-1]}}$ as a probability measure on $\Xi_t$ for almost every (a.e.)  $\xi_{[t-1]}\in \Xi_{[t-1]}$  (a.e. with respect to $\bbp_{[t-1]}$), and for any measurable sets $A\subset  \Xi_{[t-1]}$ and $B\subset \Xi_t$,
\begin{equation}
\label{depen-3}
 \bbp(A\times B)=\int_A \bbp_t^{\xi_{[t-1]}}(B)d \bbp_{[t-1]}(\xi_{[t-1]}).
\end{equation}
The conditional counterpart   $\R_t^{\xi_{[t-1]}}$  of the law invariant coherent risk measure  can be defined as the respective function of the conditional cdf of $Z$ (cf., \cite{shapick2023}).
For example, the conditional counterpart of the   Average Value-at-Risk can be defined in that way and is given by
\[
\avr^{\xi_{[t-1]}}_{\alpha} (Z)=(1-\alpha)^{-1}\int_\alpha^1\VaR_{\tau|\xi_{[t-1]}} (Z)d\tau,\;\alpha\in [0,1),
\]
where $\VaR_{\tau|\xi_{[t-1]}}$ is the conditional counterpart of
$\VaR_{\tau}$.

In turn this defines the set $\PP_t^{\xi_{[t-1]}}$ of conditional probability measures. The respective  one-step mappings $\R_t:\Z_t\to \Z_{t-1}$, defined in \eqref{mapnest},
are given by
\begin{equation}\label{funcrepres}
 [\R_t(Z_t)](\xi_{[t-1]})=\R_t^{\xi_{[t-1]}}(Z_t).
\end{equation}
This leads to the respective nested functional $\cR_T$, defined in \eqref{nestedrisk}, and the nested formulation \eqref{drsoc} of the problem with respect to  non-randomized policies of the controller.

 In this construction of nested counterparts of law invariant risk measures  it is essential that the ambiguity  sets consist of probability measures absolutely continuous with respect to the reference measure. However, the above approach can be extended beyond that setting. That is, the conditional  set $\PP_t^{\xi_{[t-1]}}$ of probability measures on $\Xi_t$ can be defined in some  away with respect to the conditional  distribution $\bbp_t^{\xi_{[t-1]}}$. For example,
 $\PP_t^{\xi_{[t-1]}}$ can consist of  probability measures  $P_t\in \cM_t$  with   a Wasserstein distance from $\bbp_t^{\xi_{[t-1]}}$   less than or equal to a   constant  $r_t>0$.

 Of course, in such construction it should be verified that the dynamic programming equations \eqref{depen-1} (in the case of non-randomized policies)  and \eqref{depen-1-random} (in the case of randomized policies)
 are well defined. There are two important cases where the corresponding multifunctions $\bbm_t$ are continuous and hence the value functions are continuous (Proposition \ref{pr-cont2-nonrandom}).
 One such  case is when  the ambiguity sets $\PP_t$ do not depend on $\xi_{[t-1]}$   (see Section \ref{subsec_game}). This corresponds to the case where  $\bbp$ is given by the direct product $\bbp_1\times\cdots\times \bbp_t$ of the marginal measures.  In that case, the ambiguity sets $\PP_t\subset \cM_t$ can be arbitrary.  For example, $\PP_t$ can be the set of probability measures with Wasserstein distance  from the reference measure  $\bbp_t$,  less than or equal to a positive constant  $r_t$.
The  dynamic programming  equations take the form \eqref{depen-1-stage}, and by Proposition \ref{pr-cont2-nonrandom} the value functions $V_t(x_t)$ are continuous.

Another important case is when   the sets $\Xi_t$, $t=1,...,T$, are {\em finite}, i.e.,  $(\xi_1,...,\xi_T)$ has discrete distribution with a finite support.
 Then continuity of the multifunctions $\bbm_t$ trivially holds, and hence  the value functions $V_t(x_t,\xi_{[t-1]})$ are continuous.

\section{Conclusions}

The main technical assumption used in the considered construction is continuity of the multifunctions $\bbm_t$. This assumption ensures continuity of the value functions and hence
measurability of the objective functions in the dynamic programming equations \eqref{depen-1} and \eqref{depen-1-random}. It holds automatically in the two important cases, namely when the ambiguity sets do not depend on the history of the process $\xi_t$ or when the sets $\Xi_t$ are finite. It could be noted that just upper semicontinuity of $\bbm_t$ is not sufficient for ensuring continuity (and even  semicontinuity) of the value functions.
 In general it could be difficult to verify the  assumption of continuity of $\bbm_t$,  in which case  the measurability question remains open.

By using the game formulation it is   possible to consider the randomized policies of the controller.
 In Theorem \ref{thrm_nonrandom_opt} we give necessary and sufficient conditions for existence of  non-randomized optimal policies.  The assumption that the multifunctions $\bbm_t$  are convex-valued, i.e., that the respective ambiguity sets are convex, is rather mild. The assumption of continuity of $\bbm_t$ is needed in order to ensure   that the respective dynamic equations are well defined.

There is a delicate point which we would like to mention. In the game formulation, after the nature chooses the probability measure   $P_t\in \PP_t^{\xi_{[t-1]}}$, the system moves to the next state $x_{t+1}=F_t(x_t,u_t,\xi_t)$ according to probability distribution $\xi_t\sim P_t$. On the other hand in the risk averse approach   is assumed existence of reference measures $\bbp_t$ and the probability law of $\xi_t$ is defined by $\xi_t\sim \bbp_t$. In  both cases,  the game and the nested  risk averse (for non-randomized  policies) approaches lead to the same   dynamic equations and the same optimal policies of the controller   and the same optimal values. However, the interpretation in terms of realizations  of the random process could be different.

\bibliographystyle{plain}
\bibliography{references}

\setcounter{equation}{0}
\section{Appendix}
\label{sec-appen}

Let $\X$ and $\Gamma$ be nonempty compact metric spaces, $f:\X\times \Gamma\to\bbr$ be continuous  real valued unction, and $\Phi:\Gamma\rr \X$ be a multifunction (point-to-set mapping).
It is said that the multifunction $\Phi$ is  {\em closed} if $\gamma_n\to \gamma$, $x_n\in \Phi(\gamma_n)$ and $x_n\to x$ implies that $x\in \Phi(\gamma)$. If $\Phi$ is closed, then $\Phi$ is closed valued, i.e., the set $\Phi(\gamma)$ is closed for every $\gamma\in \Gamma$.
It is said that the multifunction $\Phi$ is  {\em upper semicontinuous} if for any  $\bar{\gamma}\in \Gamma$ and any neighborhood $\V$  of $\Phi(\bar{\gamma})$ there is a neighborhood $\U$ of $\gamma$ such that $\Phi(\gamma)\subset \V$ for any $\gamma\in \U$. In the considered framework of compact sets, the multifunction $\Phi$ is closed iff it is closed valued and upper semicontinuous (e.g., \cite[Lemma 4.3] {BS2000}).

It is said that the multifunction $\Phi$ is  {\em lower  semicontinuous} if for any $\bar{\gamma}\in \Gamma$ and any neighborhood $\V$  of $\Phi(\bar{\gamma})$ there is a neighborhood $\U$ of $\gamma$ such that $\V\cap \Phi(\gamma)\ne \emptyset$ for any $\gamma\in \U$. It is said that
$\Phi$ is  {\em  continuous} if it is   closed  and  lower semicontinuous.

Consider value function
\begin{equation}\label{valfun}
v(\gamma):=\sup_{x\in \Phi(\gamma)} f(x,\gamma).
\end{equation}
 We have the following result (e.g., \cite[Proposition 4.4] {BS2000}).

\begin{theorem}
\label{th-app}
Suppose that the sets $\X$ and $\Gamma$ are compact,  the function   $f:\X\times \Gamma\to\bbr$ is continuous,
and  the multifunction $\Phi:\Gamma\rr \X$ is continuous. Then the value function $v(\gamma)$ is  real valued and continuous.
\end{theorem}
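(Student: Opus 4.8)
The plan is to establish that $v$ is finite (real valued) and then prove its continuity by verifying upper and lower semicontinuity separately. Since $\X$ and $\Gamma$ are compact metric spaces, every topological statement can be phrased sequentially, which I will exploit throughout. For finiteness, fix $\gamma\in\Gamma$: the set $\Phi(\gamma)$ is a nonempty closed subset of the compact set $\X$ (closedness of $\Phi$ forces it to be closed valued, as noted in the appendix), hence compact, and $f(\cdot,\gamma)$ is continuous on it; therefore the supremum defining $v(\gamma)$ is attained and finite, so $v$ is real valued and the $\sup$ is in fact a $\max$. Nonemptiness of $\Phi(\gamma)$ is implicit in the claim that $v$ is real valued and I would take it as part of the standing hypothesis on $\Phi$.

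For upper semicontinuity of $v$, I would take an arbitrary $\gamma_n\to\gamma$ and show $\limsup_n v(\gamma_n)\le v(\gamma)$. Passing to a subsequence I may assume $v(\gamma_n)$ converges to $\limsup_n v(\gamma_n)$. For each $n$ choose $x_n\in\Phi(\gamma_n)$ with $f(x_n,\gamma_n)=v(\gamma_n)$, which exists by the previous paragraph. By compactness of $\X$, after passing to a further subsequence, $x_n\to x$ for some $x\in\X$. Since $\Phi$ is closed and $\gamma_n\to\gamma$, $x_n\in\Phi(\gamma_n)$, $x_n\to x$, the sequential definition of closedness in the appendix gives $x\in\Phi(\gamma)$. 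Continuity of $f$ then yields $f(x_n,\gamma_n)\to f(x,\gamma)$, whence $\limsup_n v(\gamma_n)=f(x,\gamma)\le v(\gamma)$, using $x\in\Phi(\gamma)$.

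For lower semicontinuity of $v$, I would again take $\gamma_n\to\gamma$ and show $\liminf_n v(\gamma_n)\ge v(\gamma)$. Let $x^\ast\in\Phi(\gamma)$ attain $v(\gamma)=f(x^\ast,\gamma)$. The key input is the sequential form of lower semicontinuity of $\Phi$: given $x^\ast\in\Phi(\gamma)$ and $\gamma_n\to\gamma$, there exist $x_n\in\Phi(\gamma_n)$ with $x_n\to x^\ast$. Continuity of $f$ gives $f(x_n,\gamma_n)\to f(x^\ast,\gamma)=v(\gamma)$, and since $x_n\in\Phi(\gamma_n)$ we have $v(\gamma_n)\ge f(x_n,\gamma_n)$; taking $\liminf$ yields $\liminf_n v(\gamma_n)\ge v(\gamma)$. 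Combining the two inequalities shows $v(\gamma_n)\to v(\gamma)$ for every $\gamma_n\to\gamma$, i.e. $v$ is continuous.

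The main obstacle is the lower semicontinuity half, specifically converting the definition of lower semicontinuity of $\Phi$ into the sequential approximation statement that \emph{every} point of $\Phi(\gamma)$ is a limit of points $x_n\in\Phi(\gamma_n)$ along the given sequence $\gamma_n\to\gamma$. The upper semicontinuity half is essentially immediate, since the stated closedness of $\Phi$ is already sequential and compactness of $\X$ supplies the convergent subsequence; by contrast the lower semicontinuity half genuinely uses both that $\Phi$ is lower semicontinuous and that $\Gamma$ is metric, so that the neighborhoods in the definition can be taken to shrink to $x^\ast$ along the sequence. Establishing this sequential characterization carefully (applying the definition with shrinking neighborhoods of the relevant point and extracting indices, using $\dist$ on $\X$) is the step I would treat most carefully; the remaining estimates are routine.
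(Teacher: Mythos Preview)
Your argument is correct and is essentially the standard proof of (this variant of) the Berge maximum theorem: finiteness via compactness of $\Phi(\gamma)$, upper semicontinuity of $v$ from closedness of $\Phi$ plus a compactness/subsequence argument, and lower semicontinuity of $v$ from the sequential form of lower semicontinuity of $\Phi$. The one step you flag as needing care---passing from the neighborhood definition of lower semicontinuity of $\Phi$ to the statement that every $x^\ast\in\Phi(\gamma)$ is a limit of $x_n\in\Phi(\gamma_n)$ along any $\gamma_n\to\gamma$---is indeed the right place to be careful, and your sketch (apply the definition with balls of radius $1/k$ around $x^\ast$, pick indices $N_k$, diagonalize) goes through in a metric space.

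The paper itself does not supply a proof of this theorem; it simply records it as a known result and points to \cite[Proposition 4.4]{BS2000}. So there is no ``paper's own proof'' to compare against in detail. What you have written is precisely the argument one finds in that reference (and in standard parametric optimization texts), so your approach is not different in spirit---it is the canonical route. One minor remark: the appendix's stated definition of lower semicontinuity (``any neighborhood $\V$ of $\Phi(\bar\gamma)$'') is a slip; the intended definition is the usual one (any open $\V$ meeting $\Phi(\bar\gamma)$), which is what your sequential characterization corresponds to.
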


\end{document}